\documentclass{ws-jaa}

\newtheorem{thm}{Theorem}
\newtheorem{lem}[thm]{Lemma}

\newtheorem{cor}[thm]{Corollary}

\newtheorem{rem}[thm]{Remark}
\newtheorem{ex}[thm]{Example}

\begin{document}

\markboth{Pinar Colak}
{Two-Sided Ideals in Leavitt Path Algebras}

%%%%%%%%%%%%%%%%%%%%% Publisher's Area please ignore %%%%%%%%%%%%%%%
%
\catchline{}{}{}{}{}
%
%%%%%%%%%%%%%%%%%%%%%%%%%%%%%%%%%%%%%%%%%%%%%%%%%%%%%%%%%%%%%%%%%%%%

\title{TWO-SIDED IDEALS IN LEAVITT PATH ALGEBRAS
}

\author{\footnotesize PINAR COLAK}

\address{Department Of Mathematics, Simon Fraser University\\ 8888 University Drive, Burnaby, BC
V5A 1S6, Canada\\
\email{ppekcagl@sfu.ca}}

\maketitle

\begin{history}
\received{(Day Month Year)}
\revised{(Day Month Year)}
\accepted{(Day Month Year)}
\comby{(xxxxxxxxx)}
\end{history}

\begin{abstract}
We explicitly describe two-sided ideals in
Leavitt path algebras associated with a row-finite graph. Our main result is that any two-sided ideal
$I$ of a Leavitt path algebra associated with a row-finite graph is generated by elements of the form
$v + \sum_{i=1}^n\lambda_i g^i$, where $g$ is a cycle based
at vertex $v$. We use this result to show that a Leavitt path
algebra is two-sided Noetherian if and only if the ascending
chain condition holds for hereditary and saturated closures of the
subsets of the vertices of the row-finite graph $E$.
\end{abstract}

\keywords{Leavitt path algebra; two-sided Noetherian; two-sided
ideal}

\ccode{2000 Mathematics Subject Classification: 16D70}

\vskip 5mm

Throughout this paper $K$ denotes a field. For a directed
graph $E$, the Leavitt path algebra $L_K(E)$ of $E$ with
coefficients in $K$ has received much recent attention, see e.g.
\cite{Abrams Pino}, \cite{Ara}, \cite{Goodearl}. The two-sided ideal
structure of $L_K(E)$ has been an important focus of much of this
work. In this paper we provide an explicit description of a
generating set for any two-sided ideal of $L_K(E)$, where $E$ is any
row-finite graph. We then use this description to identify those
row-finite graphs $E$ for which $L_K(E)$ is two-sided Noetherian.

We briefly recall the basic definitions.

A \emph{{directed graph}} $E= (E^0, E^1, r, s)$ consists of two
sets $E^0$, $E^1$ and functions $r,s: E^1 \rightarrow E^0.$ The
elements of $E^0$ are called \emph{{vertices}} and elements of $E^1$
are called \emph{{edges}}. For each $e \in E^1$, $r(e)$ is the
\emph{{range}} of $e$ and $s(e)$ is the \emph{{source}} of $e$. If
$r(e)=v$ and $s(e)=w$, then we say that $v$ \emph{{emits}} $e$ and
that $w$ \emph{{receives}} $e$. A vertex which emits no edges is
called a \emph{{sink}}. A graph is called \emph{{row-finite}} if
every vertex is the source of at most finitely many edges.

A \emph{{path}} $\mu$ in a graph $E$ is a sequence of edges
$\mu = e{_1} \cdots e{_n}$ such that $r(e{_i})=s(e{_{i+1}})$ for
$i=1, \dots ,n-1$. We define the source of $\mu$ by $s(\mu):=
s(e{_1})$ and the range of $\mu$ by $r(\mu):=r(e{_n})$. If we have
$r(\mu)=s(\mu)$ and $s(e{_i}) \neq s(e_{i+1})$ for every $i\neq j$,
then $\mu$ is called a \emph{{cycle}}. A \emph{{closed path based at}} $v$ is a path $\mu = e_1 \cdots e_n$,
with $e_j \in E^1$, $n \geq 1$ and such that $s(\mu) = r(\mu)= v$.
We denote the set of all such paths by $CP(v)$. A \emph{{closed
simple path based at}} $v$ is a closed path based at $v$, $\mu = e_1
\dots e_n$, such that $s(e_j) \neq v$ for $j > 1$. We denote the set
of all such paths by $CSP(v)$. Note that a cycle is a closed simple path based at any of its vertices. However the converse may not be true, as a closed simple path based at $v$ may visit some of its vertices (but not $v$) more than once.

Let $v$ be a vertex in $E^0$. If there is no cycle based at
$v$, then we let $g=v$ and call it the \emph{trivial} cycle. If $g$
is a cycle based at $v$ of length at least 1, then $g$ is called a
\emph{non-trivial} cycle.

Let $E = (E^0, E^1)$ be any directed graph, and let $K$ be a field. We define the \emph{Leavitt path $K$-algebra} $L_K(E)$ associated with $E$ as the $K$-algebra generated by a set ${v \in E^0}$ together with a set $\{e, e^* | e \in E^1\}$, which satisfy the following relations:
\begin{enumerate}
\item $vv' = \delta_{v,v'} v$ for all $v, v' \in E^0$.
\item $s(e)e=er(e)=e$ for all $e\in E^1$.
\item $r(e)e^* = e^*s(e)=e^*$ for all $e \in E^1$.
\item $e^* f = \delta_{e,f}r(e)$ for all $e, f \in E^1$.
\item $v = \sum_{\{e \in E^1 | s(e)=v \}} ee^*$ for every $v \in E^0$ such that $0< |s^{-1}(v)|< \infty$.
\end{enumerate}

The elements of $E^1$ are called \emph{real edges}, while for $e \in E^1$ we call $e^*$ a \emph{ghost
edge}. The set $\{e^* | e \in E^1 \}$ is denoted by $(E^1)^*$. We let $r(e^*)$ denote $s(e)$,
and we let $s(e^*)$ denote $r(e)$. We say that a path in $L_K(E)$ is a \emph{{real path}}
(resp., a \emph{{ghost path}}) if it contains no terms of the
form $e^*_i$ (resp., $e_i$). We say that $p \in L_K(E)$ is a
polynomial in \emph{{only real edges}} (resp., in \emph{{only
ghost edges}}) if it is a sum of real paths (resp., ghost
edges). The \emph{length} of a real path (resp., ghost path)
$\mu$, denoted by $|\mu|$, is the number of edges it contains. The
length of $v \in E^0$ is $0$. Let $x$ be a polynomial in only real
edges (resp., in only ghost edges) in $L_K(E)$. If $x = \mu_1
+ \cdots + \mu_n$, where the $\mu_i$'s are real paths (resp.,
ghost paths), then the \emph{length} of $x$, denoted by $|x|$, is
defined as $\mathrm{max}_{i=1,\dots,n} \{|\mu_i|\}$. An edge $e$ is
called an \emph{{exit}} to the path $\mu = e_1 \cdots e_n$ if there
exists $i$ such that $s(e)=s(e_i)$ and $e \neq e_i$.

The proofs of the following can be found in \cite{Abrams Pino}.

\begin{lem}
$L_K(E)$ is spanned as a $K$-vector space by monomials
\begin{enumerate}
\item $kv_i$ with $k \in K$ and $v_i \in E^0$, or
\item $ke_{1} \dots e_{a}f^*_{1} \dots f^*_{b}$ where $k \in K$; $a, b \geq 0$, $a + b > 0$, $e_{1}, \dots, e_a,f_1, \dots, f_b \in E^1$.
\end{enumerate}
\end{lem}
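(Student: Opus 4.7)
My plan is to show that every element of $L_K(E)$, expressed as a $K$-linear combination of words in the generators $E^0 \cup E^1 \cup (E^1)^*$, can be rewritten so that each word takes one of the two forms in the statement. Since the algebra is spanned by such words by definition, it suffices to work on a single word $w = x_1 x_2 \cdots x_n$ where each $x_i$ is a vertex, a real edge, or a ghost edge, and apply the defining relations to reduce it.

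First I would eliminate all vertices appearing internally in $w$. If $x_i = v \in E^0$ is adjacent to an edge $x_{i-1}$ or $x_{i+1}$, relations (2) and (3) (together with their ghost-edge analogues derived from (3)) either absorb $v$ into the adjacent edge or force the product to be $0$; if $x_i$ is adjacent to another vertex, relation (1) does the same via $vv' = \delta_{v,v'}v$. After finitely many such absorptions, either $w$ reduces to a scalar multiple of a single vertex (giving form (i)), or $w$ becomes a nonempty string consisting entirely of real and ghost edges.

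Next I would use relation (4), $e^* f = \delta_{e,f}r(e)$, to sort the remaining word so that all real edges precede all ghost edges. Scan $w$ from left to right for the first occurrence of a subword of the form $e^* f$ with $e \in E^1$ and $f \in E^1$. If $e \neq f$, the whole word is $0$; otherwise the subword collapses to the vertex $r(e)$, which I then re-absorb by the previous step. Crucially, each application of relation (4) strictly decreases the total number of edge letters in $w$, so this procedure terminates. When it halts, the remaining word contains no ghost-edge immediately followed by a real edge, hence has the form $e_1\cdots e_a f_1^* \cdots f_b^*$ with $a+b>0$, which is form (ii).

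The main obstacle, such as it is, is purely bookkeeping: making the induction clean requires a well-chosen complexity measure (e.g.\ the pair (number of edges, position of the leftmost ghost-then-real adjacency) under lexicographic order) to verify that the reduction terminates without risk of cycling between the two stages. Relation (5) plays no role in this spanning result, as we only ever need to contract, not expand; and the five relations are used precisely once each (apart from (5)) in the direction that shortens words. The scalar $k$ is carried passively throughout, arising from combining repeated applications of the collapse $e^*e \to r(e)$ along with the field scalars already present in any linear combination.
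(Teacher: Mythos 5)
Your argument is correct and is the standard rewriting proof: the paper itself omits the proof, deferring to \cite{Abrams Pino}, where the same reduction is carried out --- absorb vertices via relations (1)--(3), then repeatedly apply relation (4) to eliminate every subword $e^*f$, with termination guaranteed because each such application strictly decreases the number of edge letters. Your observations that relation (5) is not needed for spanning and that a suitable complexity measure handles the interleaving of the two reduction stages are both accurate.
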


\begin{lem}
If $\mu, \nu \in CSP(v)$, then $\mu^*\nu=\delta_{\mu,\nu}v$. For
every $\mu \in CP(v)$ there exist unique $\mu_1, \dots, \mu_m \in
CSP(v)$ such that $\mu=\mu_1 \cdots \mu_m$.
\end{lem}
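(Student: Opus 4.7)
The plan is as follows. For the first claim, I would write $\mu = e_1 \cdots e_n$ and $\nu = f_1 \cdots f_m$ with $e_i, f_j \in E^1$, and expand
\[\mu^*\nu = e_n^* \cdots e_1^* f_1 \cdots f_m.\]
Applying relation (4) to the innermost pair gives $e_1^* f_1 = \delta_{e_1,f_1}\, r(e_1)$; if $e_1 \neq f_1$ the product is zero, and if $e_1 = f_1$ the factor $r(e_1) = s(e_2)$ is absorbed by $e_2^*$ on the left via (3) and by $f_2$ on the right via (2), peeling off the next pair. Iterating, $\mu^*\nu = 0$ unless $e_i = f_i$ for every $i \leq \min(n,m)$. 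If $n = m$ the product collapses to $r(e_n) = v$. The delicate subcase is $n \neq m$, say $n < m$: the expression reduces to $f_{n+1} \cdots f_m$, but then $s(f_{n+1}) = r(e_n) = v$ with $n + 1 > 1$ contradicts the defining property of a closed simple path. Hence $n = m$, and we obtain the Kronecker delta.

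For the second claim, given $\mu = e_1 \cdots e_n \in CP(v)$, I would let $i_1$ be the smallest index with $r(e_{i_1}) = v$ (which exists since $r(e_n) = v$). Then $\mu_1 := e_1 \cdots e_{i_1}$ lies in $CSP(v)$: any $j$ with $2 \leq j \leq i_1$ and $s(e_j) = v$ would force $r(e_{j-1}) = v$ with $j - 1 < i_1$, contradicting minimality. The tail $e_{i_1 + 1} \cdots e_n$ is again in $CP(v)$ but strictly shorter, so induction on $n$ yields the desired factorization. For uniqueness, the same minimality observation shows that in any factorization $\mu = \mu_1' \mu_2' \cdots \mu_k'$ with $\mu_j' \in CSP(v)$, the length of $\mu_1'$ must be exactly $i_1$; an induction on $n$ then forces agreement of the remaining factors.

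The main obstacle I foresee is the prefix subcase in Part 1: one must be careful to invoke the CSP condition (not merely closedness) on both $\mu$ and $\nu$, since it is precisely the requirement $s(f_j) \neq v$ for $j > 1$ that rules out $n < m$. Once this point is handled, the rest of the argument is a careful but routine iteration of the Leavitt relations (2)--(4) together with a minimality argument for the decomposition.
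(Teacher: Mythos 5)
Your proposal is correct, and it is essentially the standard argument: the paper itself gives no proof of this lemma, deferring to Abrams--Aranda Pino \cite{Abrams Pino}, where the same telescoping computation via relations (2)--(4) and the same ``first return to $v$'' decomposition are used. You correctly isolate the one point that actually requires the closed \emph{simple} path hypothesis, namely ruling out the case where one path is a proper prefix of the other.
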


For a given graph $E$ we define a preorder $\geq$ on the vertex set
$E^0$ by: $v \geq w$ if and only if $v=w$ or there is a path $\mu$
such that $s(\mu )=v$ and $r(\mu )=w$. We say that a subset $H \subseteq E^0$ is \emph{{hereditary}}
if $w \in H$ and $w \geq v$ imply $v \in H$. We say $H$ is
\emph{{saturated}} if whenever $0<|s^{-1}(v)|<\infty$ and $\{
r(e) : s(e)=v \} \subseteq H$, then $v \in H$. The \emph{hereditary saturated closure} of a set $X \subset E^0$
is defined as the smallest hereditary and saturated subset of $E^0$
containing $X$. For the hereditary saturated closure of $X$ we use the notation given in % (Abrams and Aranda Pino, 2008)
\cite{Abrams Pino arbitrary}: $\overline{X}= \bigcup^{\infty}_{n=0}
\Lambda_n(X)$, where
$$\Lambda_0(X):= \{v \in E^0 ~|~ x \geq v ~ \mathrm{for ~ some} ~ x \in X\}, {\rm ~and~for}~n\geq 1, $$ $$\Lambda_n(X):= \{ y \in E^0 ~|~ 0 < |s^{-1}(y)| < \infty ~ \mathrm{and} ~ r(s^{-1}(y)) \subseteq \Lambda_{n-1} (X) \} \cup \Lambda_{n-1}(X).$$

\begin{ex} \label{ex:5}
Let $E=(E^0, E^1, r, s)$ be the directed graph where $E^0=\{v,w\}$ and
$E^1=\{e_1, e_2, e_3\}$ such that $r(e_1)=s(e_1)=v$ and
$r(e_2)=r(e_3)=s(e_3)=w$.

Then $\overline{\{v_1\}}=\{v_1, v_2\}$, whereas $\overline{ \{v_2 \}}=\{v_2\}$.
\end{ex}

\begin{ex} \label{ex:4}
Let $E=(E^0, E^1, r, s)$ be a directed graph where $E^0=\{v_i ~|~i
\in \mathbb{Z}\}$ and $E^1=\{e_i~|~i \in \mathbb{Z} \}$ such that
$r(e_i)=v_i$ and $s(e_i)=v_{i-1}$.

Let $X=\{v_0\}$. Then we get $\Lambda_0 (X) = \{v_0, v_1, \dots\}$. Furthermore,
\begin{align*}
\Lambda_1(X) & =\Lambda_0\{v_0\} \cup \{ y \in E^0 ~|~ 0 < |s^{-1}(y)| < \infty ~ \mathrm{and} ~ r(s^{-1}(y)) \subseteq \Lambda_{0} (X) \} \\
             & = \{v_0, v_1, \dots\} \cup \{v_{-1}\} \\
             & = \{v_{-1}, v_0, v_1, \dots\}.
\end{align*}

Similarly, $\Lambda_k(X)=\{v_{-k}, v_{-k+1}, \dots\}$, and hence $\overline{X}= \bigcup^{\infty}_{n=0} \Lambda_n(X)=E^0$.
\end{ex}

With the introductory remarks now complete, we begin our discussion of the main result with the following important observation.

\begin{rem} \label{rem:source-range}
If $I$ is a two-sided ideal of $L_K(E)$ and $\mu=\mu_1+\cdots+\mu_n$ is in
$I$, where $\mu_1, \dots, \mu_n$ are monomials in $L_K(E)$, then
$\gamma_i=s(\mu_i)\mu r(\mu_i)$ is the sum of those $\mu_j$ whose
sources are all the same and whose ranges are all the same;
specifically, the sum of those $\mu_j$ for which $s(\mu_j)=s(\mu_i)$
and $r(\mu_j)=r(\mu_i)$. Moreover, $\gamma_i \in I$. Thus we may
write $\mu = \gamma_1 + \cdots + \gamma_m$, with each $\gamma_i$
with the above properties.
\end{rem}

\noindent {\bf Notation.} Let $L_K(E)_{\rm{R}}$ (resp., $L_K(E)_{\rm{G}}$) be the
subring of elements in $L_K(E)$ whose terms involve only real edges
(resp., only ghost edges). \vskip 2mm

\begin{lem} \label{lem:real} Let $I$ be a two-sided ideal
of $L_K(E)$ and $I_{\rm{real}}=I \cap L_K(E)_{\rm{R}}$. Then $I_{\rm real}$ is the
two-sided ideal of $L_K(E)_{\rm R}$ generated by elements of $I_{\rm{real}}$ having the form $v+ \sum^n_{i=1} \lambda_i g^i$,
where $v \in E^0$, $g$ is a cycle based at $v$ and $\lambda_i \in K$ for $1 \leq i \leq n$.
\end{lem}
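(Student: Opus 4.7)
Let $J$ denote the two-sided ideal of $L_K(E)_{\rm R}$ generated by those elements of $I_{\rm real}$ of the form $v + \sum_{i=1}^n \lambda_i g^i$. The containment $J \subseteq I_{\rm real}$ is immediate. I will prove the reverse by showing $x \in J$ for every $x \in I_{\rm real}$, using induction on the pair $(|x|,\ \#\text{monomial summands})$ in lexicographic order. By Remark~\ref{rem:source-range} I may assume $x = \sum \lambda_i \mu_i$ with all monomials $\mu_i$ sharing a common source $u$ and common range $w$.

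To reduce the case $u \ne w$ to $u = w$, pick $\mu_1$ of minimum length. Each $\mu_1^* \mu_i$ is then $r(\mu_1) = w$ (if $\mu_i = \mu_1$), a real tail $\mu_i'$ with $\mu_i = \mu_1 \mu_i'$ (if $\mu_1$ is a proper prefix of $\mu_i$), or zero; ghost summands are ruled out because minimality forbids any $\mu_i$ from being a proper prefix of $\mu_1$. Thus $\mu_1^* x \in I_{\rm real}$, its monomials are closed paths at $w$, and $|\mu_1^* x| < |x|$. By induction $\mu_1^* x \in J$, so $\mu_1 \mu_1^* x \in J$; the element $x - \mu_1 \mu_1^* x$ shares source $u$ and range $w$ but has strictly fewer terms, so it too lies in $J$ by induction, giving $x \in J$.

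In the case $u = w = v$, every $\mu_i$ lies in $CP(v) \cup \{v\}$. The workhorse is the identity $\mu^* \mu = v$ for any $\mu \in CP(v)$, obtained by iterating the $CSP(v)$-factorization of Lemma~2 together with relation~(4). This immediately settles the single-nontrivial-monomial case $x = \lambda \mu$: we get $\lambda v = \mu^* x \in I_{\rm real}$, so $v$ itself is a generator of $J$, and $x = \lambda \mu \cdot v \in J$. For a multi-term $x$ whose constant $v$-coefficient is zero, pick $\mu_1$ of minimum positive length; the same prefix analysis shows $\mu_1^* x \in I_{\rm real}$, with strictly smaller length and now a nonzero $v$-coefficient, which feeds into the final sub-case.

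The principal obstacle is the multi-term case with nonzero $v$-coefficient, say $x = \lambda_0 v + \sum_{i \ge 1} \lambda_i \mu_i$ with $\lambda_0 \ne 0$. Here $\mu_1^* x$ acquires an unwanted ghost summand $\lambda_0 \mu_1^*$, so I plan to work instead with the sandwich $\mu_1^* x \mu_1$, whose purely real expression emerges via $\mu_1^* \mu_1 = v$. A crucial ingredient is that any two \emph{distinct} cycles $g, g'$ based at $v$ satisfy $g^* g' = 0$: if one were a proper prefix of the other, the longer one would revisit $v$ in the middle, contradicting the cycle condition. Combining this nonoverlap with the $CSP(v)$-factorization of Lemma~2, one should be able to extract generators of the form $v + \sum \lambda_i g^i \in I_{\rm real}$ out of $x$ one cycle at a time and subtract them off, reducing $x$ either to the zero-$v$-coefficient sub-case or to an element with no closed paths at $v$---both already handled. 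Making this extraction precise, and arranging a well-founded complexity measure that strictly decreases under it (especially in the presence of closed simple paths at $v$ that are not themselves cycles), is the technical heart of the argument.
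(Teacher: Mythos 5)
Your setup and your first reductions are sound and run parallel to the paper's: the induction on (length, number of terms), the use of Remark~\ref{rem:source-range} to assume a common source and range, the prefix analysis showing $\mu_1^*x\in I_{\rm real}$ for $\mu_1$ of minimal length, and the reduction to $x=v+\sum_{i\geq 1}\lambda_i\mu_i$ with each $\mu_i$ a closed path based at $v$. But the case you yourself flag as ``the technical heart'' --- a multi-term element with nonzero $v$-coefficient --- is exactly where the lemma's content lies, and your proposal does not prove it. Two concrete problems. First, ``extracting generators and subtracting them off'' requires each extracted piece to lie in $I_{\rm real}$: the lemma demands generators taken \emph{from} $I_{\rm real}$, not merely elements of the right shape, so formally splitting $x$ into $v+\sum\lambda_i g^{r_i}$ plus a remainder puts neither piece in $I$ unless you produce each piece from $x$ by multiplications inside the ideal. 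Second, your nonoverlap fact $g^*g'=0$ is stated for distinct \emph{cycles}, but the monomials $\mu_i$ are arbitrary closed paths at $v$; by Lemma 2 they factor into closed \emph{simple} paths based at $v$, which need not be cycles nor products of powers of cycles. An element such as $v+\lambda\sigma$ with $\sigma\in CSP(v)$ not a cycle is not of the generator form, and your cycle-at-a-time extraction does not reach it.

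The paper closes this gap with a device you do not have. If $|CSP(v)|\leq 1$, the unique closed simple path at $v$ is forced to be a cycle $g$ and every closed path at $v$ is a power of $g$, so $x$ is already a generator --- no extraction needed. If there are two distinct $g_1,g_2\in CSP(v)$, one conjugates: choosing $k$ with $|g_1^k|>\max_i|\mu_i|$, unique $CSP(v)$-factorization forces $(g_1^*)^k x\, g_1^k = v+\sum_{\{i\,:\,\mu_i=g_1^{r_i}\}}\lambda_i\mu_i\in I_{\rm real}$, and then $g_2^*\bigl((g_1^*)^k x\, g_1^k\bigr)g_2=v\in I_{\rm real}$ because $g_2^*g_1=0$. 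Thus $v$ itself is one of the allowed generators and $x=xv\in J$, with no subtraction step and no separate treatment of non-cycle simple paths. You would need to supply an argument of comparable force (this double-conjugation trick, or a genuinely well-founded extraction that keeps every intermediate element inside $I_{\rm real}$) before your proof is complete.
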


\begin{proof} Let $J$ be the ideal of $L_K(E)_{\rm R}$
generated by elements in $I_{\rm{real}}$ of the indicated form. Our
claim is $J=I_{\rm{real}}$. Towards a contradiction, suppose
$I_{\rm{real}} \setminus J \neq \emptyset$; choose $\mu \in
I_{\rm{real}} \setminus J$ of minimal length. By Remark
\ref{rem:source-range}, we can write $\mu = \tau_1 + \cdots + \tau_m$
with each $\tau_i$ is in $I_{\rm{real}}$ and is the sum of those
paths whose sources are all the same and whose ranges are all the
same. Since $\mu \not \in J$, one of the $\tau_i \not \in J$.
Replacing $\mu$ by $\tau_i$, we may assume that $\mu = \lambda_1\mu_1
+ \cdots + \lambda_n\mu_n$ where all the $\mu_i$ have the same
source and the same range. First we claim that one of the $\mu_i$
must have length 0, i.e. $\mu_i = kv$ for some vertex $v \in E^0$
and $k \in K$. Suppose not. Then for each $i$ we can write
$\mu_i=e_i\nu_i$ where $e_i \in E^1$. So $\mu= \sum^n_{i=1}
e_i\nu_i$. Now
$$e_i^*\mu=\sum_{\{j| e_j=e_i\}} \lambda_j \nu_j \in I_{\rm{real}}$$ and has smaller
length than $\mu$. So $e_i^*\mu \in J$ and hence clearly $e_ie_i^*\mu \in J$.
Then $$\mu = \sum_{{\rm distinct}~ e_i} e_ie_i^*\mu \in J,$$ a
contradiction. So we can assume without loss of generality that
$\mu_1=kv$, with $v$ a vertex. Since all the terms in $\mu$ have the same source
and the same range, each $\mu_i$ is a closed path based
at $v$. Multiplying by a scalar if necessary we can write $\mu = v +
\lambda_2\mu_2 + \cdots + \lambda_n\mu_n$, $\mu_i$ closed paths at
$v$.

\textbf{Case I:} There exists no, or exactly one, closed simple path at $v$. If there are no closed simple paths at $v$ then we get $\mu \in J$, a contradiction. If there is exactly one closed simple path $g$ based at $v$ then necessarily $g$ must be a cycle. Furthermore, the only paths in $E$ which have source and range equal to $v$ are powers of $g$. Then $\mu = v + \sum_{i=2}^n \lambda_i g^{m_i} \in J$, a contradiction.

\textbf{Case II:} There exist at least two distinct closed simple
paths $g_1$ and $g_2$ based at $v$. As $g_1 \neq g_2$ and neither is
a subpath of the other, $g_2^*g_1=0=g_1^*g_2$. Without loss of
generality assume $|\mu_2| \geq \cdots \geq |\mu_n| \geq 1$. Then
for some $k \in \mathbb{N}$ $|g_1^k|>|\mu_2|$. Multiplying by
$(g_1^*)^k$ on the left and $g_1^k$ on the right, we get
$$\mu'=(g^*_1)^k \mu(g_1)^k=v + \sum_{i=2}^n \lambda_i(g_1^*)^k \mu_i (g_1)^k.$$ Note that if $0 \neq (g_1^*)^k \mu_i (g_1)^k$, then $(g_1^*)^k \mu_i \neq 0$. Since $|g_1^k|>|\mu_i|$, we get $g_1^k= \mu_i\mu_i'$ for some path $\mu_i'$. Since the $\mu_i$ are closed paths based at the vertex $v$, one gets from the equation $(g_1)^k = \mu_i\mu_i'$ that $\mu_i = (g_1)^r$ for some integer $r \leq k$. So $\mu_i$ commutes with $(g_1)^k$ and thus each non-zero term $(g_1^*)^k\mu_i(g_1)^k = \mu_i$.

%Note that if $0 \neq (g_1^*)^k \mu_i (g_1)^k$, then $(g_1^*)^k \mu_i \neq 0$. Since $|g_1^k|>|\mu_i|$, we get $g_1^k= \mu_i\mu_i'$ for some path $\mu_i'$ for every $i \in \{2, \dots, n \}$. Then $(g^*_1)^k \mu_i(g_1)^k={\mu_i'}^*\mu_i\mu_i' \neq 0$, and since $|\mu_i\mu_i'|> |\mu_i'|$, we get ${\mu_i'}^*\mu_i\mu_i' = \nu_i$ for some real path $\nu_i$.

%So $$\mu'= v + \lambda_2\nu_2 + \cdots + \lambda_n\nu_n$$ where $\nu_i$ is a part of the path $g_1^k$.
Since $g_2^*g_1=0$, $g_2^*\mu_i = 0$ for every $i \in \{2, \dots, n \}$
and so we get $g_2^*\mu'g_2 = g_2^*vg_2 = v \in I \cap L_K(E)_{\rm R}= I_{\rm real}$,
which implies that $v$ is in $J$. Then $\mu = \mu v \in J$, a contradiction.
\end{proof}

%\textbf{Case II:} There are at least two closed simple paths at $v$. Let's call these paths $g_1$ and $g_2$. Note that $g^*_1g_2=g^*_2g_1=0$. Without loss of generality assume $|\mu_2| \geq \cdots  \geq |\mu_n|$. There exists $k \in \mathbb{N}$ such that $|g^k_1| > |\mu_2|$. Multiply $\mu$ by $(g^*_1)^k$ on the left and by $g^k_1$ on the right. Then we get $$\mu'=(g^*_1)^k \mu(g_1)^k = (g^*_1)^k v (g_1)^k + (g^*_1)^k {\mu}_2(g_1)^k + \dots + (g^*_1)^k{\mu}_{n}(g_1)^k = v+ {\gamma_1} {\mu'}_1 + \dots + {\gamma_m}{\mu'}_m,$$ where each ${\mu'}_i$ is $g_1^{m_i}$ for some $m_i \in \mathbb{N}$ and $\gamma_i$ is in $K$ for $i=1,\dots,m$. Next we multiply $\mu'$ by $g^*_2$ on the left and by $g_2$ on the right. Since $g_2^* g_1=0$, we get $g^*_2 \mu' g_2 = g^*_2 v g_2 = v$. But this means $v$ is in $I_{\rm{real}}$ and hence is in $J$. Then note that $\mu=v\mu$ is also in $J$, a contradiction.

%We conclude $I_{\rm{real}}$ is generated by elements of the form $v+ \sum^m_{k=1} \lambda_kg^k$, where $v \in E^0$, $g$ is a cycle at $v$ and $\lambda_1, \dots, \lambda_m \in K$. \qed \\

It can be easily shown that the analogue of Lemma \ref{lem:real} is true for $I_{\mathrm{ghost}}$. We state this for the sake of completeness.

\begin{lem}
Let $I$ be a two-sided ideal of $L_K(E)$. Then $I_{\mathrm{ghost}}$ is
generated by elements of the form $v+ \sum^m_{k=1}
\lambda_k(g^*)^k$, where $v \in E^0$, $g$ is a cycle at $v$ and
$\lambda_i \in K$ for $1 \leq i \leq n$.
\end{lem}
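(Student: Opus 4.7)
My plan is to deduce this from Lemma \ref{lem:real} using the canonical involution on $L_K(E)$. Define $*: L_K(E) \to L_K(E)$ on generators by $v^* = v$, $(e)^* = e^*$ and $(e^*)^* = e$, extended antimultiplicatively via $(xy)^* = y^* x^*$. A direct verification that each of the relations (1)--(5) is preserved shows that $*$ is a well-defined $K$-linear antiautomorphism, and in particular it restricts to a $K$-linear bijection $L_K(E)_{\mathrm{R}} \to L_K(E)_{\mathrm{G}}$ sending a real path $e_1 \cdots e_k$ to the ghost path $e_k^* \cdots e_1^*$ (and reversing products).

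Since $*$ is an antiautomorphism, for the given two-sided ideal $I$ of $L_K(E)$ the image $I^* := \{x^* : x \in I\}$ is again a two-sided ideal of $L_K(E)$. The crucial identification is $(I_{\mathrm{ghost}})^* = (I^*)_{\mathrm{real}}$, which follows immediately from the fact that $*$ swaps $L_K(E)_{\mathrm{R}}$ and $L_K(E)_{\mathrm{G}}$ while restricting to an involution on $I \cup I^*$. Similarly, if $J_0$ is the two-sided ideal of $L_K(E)_{\mathrm{R}}$ generated by a subset $S$, then writing a typical element as $\sum a_k s_k b_k$ and applying $*$ shows that $J_0^*$ is the two-sided ideal of $L_K(E)_{\mathrm{G}}$ generated by $S^* := \{s^* : s \in S\}$.

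Combining these, I would apply Lemma \ref{lem:real} to the ideal $I^*$: it yields that $(I^*)_{\mathrm{real}}$ is the two-sided ideal of $L_K(E)_{\mathrm{R}}$ generated by elements of the form $v + \sum_{i=1}^n \lambda_i g^i$, where $g$ is a cycle at $v$. Applying $*$ once more, and using $(g^i)^* = (g^*)^i$, one concludes that $I_{\mathrm{ghost}} = ((I^*)_{\mathrm{real}})^*$ is generated as a two-sided ideal of $L_K(E)_{\mathrm{G}}$ by the elements $v + \sum_{i=1}^n \lambda_i (g^*)^i$, which is exactly the asserted form.

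The only real obstacle is bookkeeping: one must confirm that $*$ is well-defined on $L_K(E)$ (i.e.\ that relation (5) survives when the order of multiplication is reversed, which it does since $ee^*$ is symmetric in shape after the involution) and that ideal generation is preserved under antiautomorphisms. Both verifications are routine. If one prefers to avoid the involution, one can instead imitate the proof of Lemma \ref{lem:real} line by line, replacing the reduction step ``$e_i^* \mu$'' by ``$\mu e_i$'' (so that now $\mu = \sum \mu e_i e_i^* \in J$ via the identity $e_j^* e_i e_i^* = \delta_{e_i, e_j} e_i^*$), and in Case II multiplying by $(g_1)^k$ on the left and $(g_1^*)^k$ on the right.
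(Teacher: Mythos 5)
Your argument is correct: the paper gives no proof of this lemma (it is dismissed as "easily shown"), and the canonical involution $v \mapsto v$, $e \mapsto e^*$, $e^* \mapsto e$ is precisely the standard way to make that symmetry rigorous, transporting Lemma~\ref{lem:real} applied to $I^*$ back to $I_{\mathrm{ghost}}$. Both your involution route and your sketched direct imitation (multiplying on the right by $e_i$ instead of on the left by $e_i^*$) are sound.
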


\begin{thm} \label{THM}
Let $E$ be a row-finite graph. Let $I$ be any two-sided ideal of
$L_K(E)$. Then $I$ is generated by elements of the form $v+
\sum^m_{k=1} \lambda_kg^k$, where $v \in E^0$, $g$ is a cycle at $v$
and $\lambda_1, \dots, \lambda_m \in K$.
\end{thm}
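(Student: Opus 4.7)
The plan is to prove that $I$ equals the two-sided ideal $J$ of $L_K(E)$ generated by those elements of $I$ of the claimed form. The inclusion $J \subseteq I$ is trivial, and Lemma \ref{lem:real} already gives $I_{\rm real} = I \cap L_K(E)_{\rm R} \subseteq J$, since the generators appearing there are precisely of the shape in the theorem. The remaining task is $I \subseteq J$, and I will obtain every $x \in I$ from elements of $I_{\rm real}$ by repeated use of the Cuntz--Krieger relation (5) from the definition of $L_K(E)$. This is exactly where row-finiteness will enter.

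Let $x \in I$. By Remark \ref{rem:source-range} I may assume $x = uxw$ for some $u, w \in E^0$, and by Lemma 1 I may write $x = \sum_{i} k_i \alpha_i \beta_i^*$ with $\alpha_i, \beta_i$ real paths (with the convention $\beta_i = w$ when the $i$-th monomial has no ghost part). Since every monomial has range $w$, each $\beta_i$ starts at $w$. Set $N(x) := \max_i |\beta_i|$ and induct on $N(x)$.

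If $N(x) = 0$ then $x \in L_K(E)_{\rm R} \cap I = I_{\rm real} \subseteq J$. If $N(x) \geq 1$ then some $\beta_i$ has positive length beginning at $w$, so $w$ is not a sink, and row-finiteness gives $0 < |s^{-1}(w)| < \infty$. Relation (5) then yields
$$x = xw = \sum_{f \in s^{-1}(w)} (xf)\, f^*.$$
For each such $f$, a routine term-by-term computation (using $f^* f = r(f)$ and $e^* f = 0$ for $e \neq f$) shows that $\alpha_i \beta_i^* f$ either vanishes, equals $\alpha_i f$ (ghost length $0$, when $\beta_i = w$), or equals $\alpha_i \gamma_i^*$ with $|\gamma_i| = |\beta_i| - 1$ (when $\beta_i = f \gamma_i$). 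Hence $N(xf) \leq N(x) - 1$, so $xf \in J$ by induction, and therefore $x \in J$.

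The main obstacle is really just choosing the correct quantity to induct on; once ``maximum length of a ghost factor'' is in hand, relation (5) does exactly the necessary bookkeeping. Row-finiteness is used in precisely one place, guaranteeing that relation (5) is available at every non-sink vertex we encounter; without that hypothesis the identity $w = \sum_{s(f)=w} ff^*$ fails in general and the statement would require additional breaking-vertex generators, which is why the theorem is restricted to row-finite graphs.
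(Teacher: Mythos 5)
Your proof is correct and follows essentially the same route as the paper: reduce to $I_{\rm real}$ via Lemma \ref{lem:real}, then strip ghost edges by applying relation (5) at the common range vertex $w$, with row-finiteness guaranteeing that relation is available there. The only difference is bookkeeping --- you induct on the maximum ghost length $N(x)$, whereas the paper runs a minimal-counterexample argument on the number of terms and the lexicographic order of the tuple of ghost lengths; both quantities decrease under right multiplication by an edge, so the arguments are interchangeable.
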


\begin{proof} Let $J$ be the two-sided ideal of $L_K(E)$
generated by $I_{\rm{real}}$. By Lemma \ref{lem:real}, it is enough if we show that $I=J$.
Suppose not. Choose $x = \sum^d_{i=1} \lambda_i\mu_i\nu_i^*$ in $I
\setminus J$, where $d$ is minimal and $\mu_1, \dots, \mu_d, \nu_1,
\dots, \nu_d$ are real paths in $L_K(E)_R$. By Remark
\ref{rem:source-range}, $x = \alpha_1+ \cdots + \alpha_m$, where
each $\alpha_i \in I$ and is a sum of those monomials all having the
same source and same range. Since $x\not \in J$, $\alpha_j \not \in
J$ for some $j$. By the minimality of $d$, we can replace $x$ by
$\alpha_i$. Thus we we can assume that $x=\sum^d_{i=1}
\lambda_i\mu_i\nu_i^*$, where for all $i$, $j$, $s(\mu_i\nu_i^*)=
s(\mu_j\nu_j^*)$ and $r(\mu_i\nu_i^*)=r(\mu_j\nu_j^*)$. Among all
such $x =\sum^d_{i=1} \lambda_i\mu_i\nu_i^* \in I \setminus J$ with
minimal $d$, select one for which $(|\nu_1|, \dots, |\nu_d|)$ is the
smallest in the lexicographic order of $(\mathbb{Z}^+)^d$. First
note that we have $|\nu_i| > 0$ for some $i$, otherwise $x$ is in
$I_{\rm real} \subset J.$ Let $e$ be in $E^1$. Then note that $$xe =
\sum^d_{i=1} \lambda_i\mu_i\nu_i^*e=
\sum^{d'}_{i=1}\lambda_i\mu_i(\nu_i')^*$$ either has fewer terms
$(d' < d)$, or $d= d'$ and $(|\nu_i'|, \dots, |\nu_d'|)$ is smaller
than $(|\nu_1|, \dots, |\nu_d|)$. Then by minimality, we get $xe$ is
in $J$ for every $e\in E^1$. Since $|\nu_i| >0$ for some $i$, $w$ is
not a sink and emits finitely many edges. Hence we have $$x = xw = x
\sum_{\{e_j \in E^1 : s(e_j)=v\}} e_je_j^*= \sum_{\{e_j \in E^1 :
s(e_j)=v\}} (xe_j)e_j^* \in J.$$ We get a contradiction, so the
result follows.
\end{proof}

%Ranga's proof:
%We claim that $|\nu_i|=0$ for some $i$. Suppose, on the contrary,
%$|\nu_i| > 0$ for all $i$, so that $\nu_i = e_i\nu_i'$ with $e_i \in
%E^1$, for all $i = 1, \dots, d$. Then $$xe = \sum^d_{i=1}
%\lambda_i\mu_i\nu_i^*e= \sum^d_{\{j| e_j=e\}}
%\lambda_j\mu_j(\nu_j')^* \in I,$$ for any $e\in E^0.$ If the number
%of monomial terms in $xe$ is less than $d$, then $x \in J$. If the
%number of monomial terms in $xe$ is $d$, then since $(|\nu_1'|,
%\dots, |\nu'_d|) < (|\nu_1|, \dots, |\nu_d|)$, the minimality
%condition implies $xe \in J$. Clearly, $xee^* \in J$ and $$xee^*=
%\sum^d_{\{j| e_j=e\}} \lambda_j\mu_j(\nu_j')^*e^*= \sum^d_{\{j|
%e_j=e\}} \lambda_j\mu_j\nu_j^* \in J.$$ Then $$x = \sum_{\{{\rm
%distinct}~ e_j | j\in \{1, \dots, d\}\}} xe_je_j^* \in J,$$ a
%contradiction. So one of the $\nu_j$, say $\nu_1$, is of the form
%$kv$ for some $v \in E^0$ and $0 \neq k \in K$. Multiplying by
%$k^{-1}$ if necessary and replacing $k^{-1}x$ by $x$ we can write $x
%=\mu_1v+ \lambda_2\mu_2\nu_2^*+ \cdots + \lambda_d\mu_d\nu_d^*$,
%where the $\mu_i$ have the same source $v$ and sam range $v$. Now
%$\mu_1v=\mu_1 \in I_{\rm real} \subset J$. Also by the minimality of
%$d$, $\lambda_2\mu_2\nu_2^*+ \cdots + \lambda_d\mu_d\nu_d^* \in J$.
%Hence $x\in J$, a contradiction. Thus $I = J$. \qed

\begin{rem}
We note that the Theorem does not hold for arbitrary graphs. An example is the ``infinite clock'': Let $E^0 = \{v, w_1, w_2, \dots\}$ and $E^1=\{e_1,e_2, \dots \}$ with $r(e_i)=w_i$ and $s(e_i)=v$. Then the two-sided ideal generated by $v - e_1e_1^*$ is not generated by the elements of the desired form.
\end{rem}

\noindent {\bf Notation.} The element $v + \sum_{i=1}^n \lambda_ig^i$ is denoted by $p(g)$, where $p(x) = 1 + \lambda_2x+\cdots+ \lambda_nx^n \in K[x]$. \vskip 2mm

%\begin{lem} \label{lem:nontrivial}
%Suppose $g_1$ and $g_2$ are two different non-trivial cycles based
%at the vertex $v \in E^0$. Let $p(g_1)=v +
%\sum_{i=1}^n\lambda_ig_1^i$. Then $I=<p(g_1)>=<v>$.
%\end{lem}

%\noindent {\bf Proof.} It is sufficient to show that $v$ is in $I$.
%We have $p(g_1)=v + \sum_{i=1}^n\lambda_ig_1^i$, then
%$g_2^*p(g_1)g_2 \in I$ and $$g_2^*p(g_1)g_2 = g_2^*vg_2 +
%g_2^*\sum_{i=1}^n\lambda_ig_1^i g_2 = v \in I.$$ \qed

Now the Theorem is in hand, we are going to put the pieces together
to get the Noetherian result.

\begin{rem} \label{rem:euclidean}
Let $g$ be a cycle based at $v \in E^0$ and let $p_1(x), p_2(x) \in K[x]$ be such that $p_1(g), p_2(g) \in I$. If we let $q(x)={\rm gcd}(p_1(x),p_2(x)) \in K[x]$, then $q(g) \in <p_1(g), p_2(g)>$.
\end{rem}

\begin{lem} \label{lem: Ranga 2}
Let $I$ be a two-sided ideal of $L_K(E)$, where $E$ is an arbitrary
graph. Suppose $g$, $h$ are two non-trivial cycles based at distinct vertices
$u$, $v$ respectively. Suppose $u+ \sum a_rg^r = p(g)$ and $v+ \sum
b_sh^s = q(h)$ both belong to $I$, where $p(x)$ and $q(x)$ are
polynomials of smallest positive degree in $K[x]$ with $p(0) = 1 =
q(0)$ such that $p(g) \in I$ and $q(h) \in I$. If $u \geq v$, then
$q(h) \in <p(g)>.$
\end{lem}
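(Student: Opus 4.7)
Since $u\ge v$, there is a real path $\mu$ with $s(\mu)=u$ and $r(\mu)=v$; the strategy is to analyze $\mu^* p(g)\,\mu\in\langle p(g)\rangle$. Using $\mu^* u=\mu^*$ and $\mu^*\mu=v$, this element equals
\[
v+\sum_{r\ge 1}a_r\,\mu^* g^r\mu.
\]
By relation~(4), $\mu^* g^r\ne 0$ only if the edge-word of $\mu$ is an initial segment of that of $g^r$. If some $\mu$ can be chosen for which this fails for every $r\ge 1$ --- in particular, whenever $v$ is not a vertex of $g$ --- then $\mu^* g^r=0$ for all $r\ge 1$, giving $v\in\langle p(g)\rangle$ and hence $q(h)=v\cdot q(h)\in\langle p(g)\rangle$, finishing this easy case.

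Otherwise $v$ lies on $g$; take $\mu$ to be the initial arc of $g$ ending at $v$ and write $g=\mu\mu'$. A direct computation yields $\mu^* g^r\mu=(\mu'\mu)^r$, so setting $\tilde h:=\mu'\mu$ (a cycle based at $v$), I obtain $p(\tilde h)\in\langle p(g)\rangle$. Now split on whether $\tilde h=h$. If $\tilde h\ne h$, then $\tilde h$ and $h$ are distinct elements of $CSP(v)$, and the earlier lemma $\mu^*\nu=\delta_{\mu,\nu}v$ on $CSP(v)$ forces $h^*\tilde h^r=0$ for every $r\ge 1$. Therefore $h^* p(\tilde h)=h^*\in\langle p(g)\rangle$, so $v=h^* h\in\langle p(g)\rangle$, and again $q(h)\in\langle p(g)\rangle$.

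The real work is the remaining case $\tilde h=h$: the trick of pulling $v$ into $\langle p(g)\rangle$ is no longer available, and one must extract $q(h)$ from the minimality of $p$ and $q$. The identities $g=\mu\mu'$ and $h=\mu'\mu$ give the conjugation relation $\mu h^s=g^s\mu$ for all $s\ge 0$, so writing $q(g):=u+\sum b_s g^s$ yields $q(g)\mu=\mu q(h)\in I$. Right-multiplication by $\mu^* g$, combined with $\mu\mu^* g=g$ (a consequence of $\mu\mu^*\mu\mu'=\mu v\mu'=g$), produces $q(g)\,g=(xq)(g)\in I$. Applying Remark~\ref{rem:euclidean} to $p$ and $xq$: since $p(0)=1$ forces $\gcd(p,x)=1$, we have $\gcd(p,xq)=\gcd(p,q)$, and the minimality of $p$ forces $\gcd(p,q)=p$, i.e.\ $p\mid q$. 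Symmetrically, $p(h)=\mu^* p(g)\mu\in\langle p(g)\rangle\subseteq I$, and the minimality of $q$ together with Remark~\ref{rem:euclidean} gives $q\mid p$. With both polynomials normalized to have constant term $1$, this forces $p=q$, so $q(h)=p(h)\in\langle p(g)\rangle$. The main obstacle is precisely this last case, where the conclusion must be squeezed out of a Euclidean-algorithm argument in $K[x]$ together with the $\mu$-conjugation linking polynomials in $g$ to polynomials in $h$.
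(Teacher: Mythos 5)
Your proof is correct and follows the same basic strategy as the paper's: conjugate $p(g)$ by a connecting path $\mu$ from $u$ to $v$, dispose of the case where $v$ does not lie on $g$ by producing $v$ itself inside $\langle p(g)\rangle$, and otherwise use the rotation $g=\mu\mu'$, $\mu'\mu$ together with Remark~\ref{rem:euclidean} and the minimality of $p$ and $q$ to force $p=q$. The one genuine difference is that you treat the case where the given cycle $h$ based at $v$ is \emph{not} the rotation $\tilde h=\mu'\mu$ of $g$: the paper simply writes $h=\nu\mu$, tacitly identifying $h$ with that rotation, whereas the hypothesis only says $h$ is some non-trivial cycle at $v$. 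Your observation that $h$ and $\tilde h$ are then distinct elements of $CSP(v)$, so $h^*\tilde h=0$ and hence $v=h^*p(\tilde h)h\in\langle p(g)\rangle$, cleanly closes that case and is a genuine improvement in rigor. Your route to $p\mid q$ (producing $(xq)(g)\in I$ and cancelling the factor $x$ via $\gcd(p,x)=1$) is slightly more roundabout than the paper's direct computation $\nu^*q(h)\nu=q(g)\in I$, but both are valid; everything else, including the reliance on the convention that ``smallest positive degree'' entails $u,v\notin I$, matches the paper's reasoning.
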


\begin{proof} Let $\mu$ be a path from $u$ to $v$. We claim $v$
must lie on the cycle $g$. Because, otherwise, $\mu^*g = 0$ and so
$\mu^*p(g)\mu = \mu^*u\mu + \sum a_r\mu^*g^r\mu = v \in I$. This
contradicts the fact that ${\rm deg}q(x) >0$. So we can write $g=
\mu\nu$ and $h=\nu\mu$ where $\nu$ is the part of $g$ from $v$ to
$u$. Since $\mu^*g\mu = h$, we get $\mu^*p(g)\mu = p(h) \in I$. By
the minimality of $q(x)$, $q(x)$ is a divisor of $p(x)$ in $K[x]$.
Similarly, since $\nu^* q(h)\nu = q(g)\in I$, we conclude that
$p(x)$ is a divisor of $q(x)$. Thus $q(x) = kp(x)$ for some $k \in
K$. Since $p(0)=1=q(0)$, $q(x)=p(x)$. Hence $q(h) = p(h) = \mu^*
p(g) \mu \in <p(g)>$.
\end{proof}

The next Lemma and its proof is implicit in the proof of Lemma 7 in
\cite{Abrams 2006}.
%(Abrams and Aranda Pino, 2006).
\begin{lem} \label{lem: Ranga 3}
Let $E$ be an arbitrary graph and $S \subset E^0$. If $v \in
\overline{S}$, and there is a non-trivial cycle based at $v$, then
$u \geq v$ for some $u \in S$.
\end{lem}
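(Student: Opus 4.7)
My plan is to induct on the smallest non-negative integer $n$ such that $v \in \Lambda_n(S)$, where the $\Lambda_n(S)$ are the sets used to build $\overline{S}$ as in the definition recalled earlier.

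The base case $n=0$ is immediate: by definition of $\Lambda_0(S)$, there is some $u \in S$ with $u \geq v$, and we are done (the assumption about a cycle at $v$ is not needed here).

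For the inductive step, suppose the statement holds for every vertex in $\Lambda_{n-1}(S)$ that admits a non-trivial cycle, and let $v \in \Lambda_n(S) \setminus \Lambda_{n-1}(S)$ have a non-trivial cycle $g$ based at it. By the construction of $\Lambda_n(S)$, we have $0 < |s^{-1}(v)| < \infty$ and $r(s^{-1}(v)) \subseteq \Lambda_{n-1}(S)$. Write $g = e_1 e_2 \cdots e_k$ with $s(e_1) = v = r(e_k)$, and set $v' := r(e_1)$. Since $e_1 \in s^{-1}(v)$, we get $v' \in \Lambda_{n-1}(S)$. If $k = 1$ then $v' = v$, and the loop $g$ is itself a non-trivial cycle at $v'$; if $k \geq 2$, then the cyclic rotation $g' := e_2 e_3 \cdots e_k e_1$ is a non-trivial cycle based at $v'$ (the vertices appearing in $g'$ are the same as those in $g$, and hence remain pairwise distinct except for the matching $s(g') = r(g') = v'$, so $g'$ is a cycle in the sense defined in the paper). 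In either case, $v'$ is a vertex in $\Lambda_{n-1}(S)$ at which a non-trivial cycle is based, so the inductive hypothesis gives some $u \in S$ with $u \geq v'$. Finally, $v' \geq v$ (trivially if $k=1$, and via the sub-path $e_2 \cdots e_k$ if $k \geq 2$), so $u \geq v' \geq v$, as required.

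The main obstacle is ensuring that the inductive hypothesis can actually be invoked at $v'$; this requires the small observation that $v'$ inherits a non-trivial cycle from $v$ by cyclic rotation of $g$. Once that is in place, the argument reduces to peeling off one layer of the saturation filtration at a time.
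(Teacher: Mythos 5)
Your proof is correct and follows essentially the same route as the paper: induct on the least $n$ with $v \in \Lambda_n(S)$, note that some vertex of $r(s^{-1}(v))$ lies on the cycle $g$ and hence lies in $\Lambda_{n-1}(S)$, carries a (rotated) non-trivial cycle, and dominates $v$, then invoke the inductive hypothesis. Your explicit verification that the cyclic rotation of $g$ is again a cycle based at $v'$ is a detail the paper leaves implicit, but the argument is the same.
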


\begin{proof} We recall that  $\overline{S} = \cup_{n \geq 0}
\Lambda_n(S)$. Let $k$ be the smallest non-negative integer such
that $v \in \Lambda_k(S)$. We prove the Lemma by the induction on
$k$, the Lemma being true by definition when $k=0$. Assume $k >0$ and that the
Lemma holds when $k=n-1$. Let $k=n$. Since $v \in \Lambda_n(S)\setminus
\Lambda_{n-1}(S)$, $0 < |s^{-1}(v)| < \infty$ and $\{w_1, \dots,
w_m\} = r(s^{-1}(v)) \subset \Lambda_{n-1}(S)$. Since $v$ is the
base of a non-trivial cycle $g$, one of the vertices, say, $w_j$
lies on the cycle $g$ and so $w_j \geq v$. Since $w_j \in
\Lambda_{n-1}(S)$ and is the base of a cycle, by induction there is a $u \in S$
such that $u \geq w_j$. Then $u \geq v$, as desired.
\end{proof}

We also need the following Lemma, whose proof is given in the first
paragraph of the proof of Theorem 5.7 in \cite{Tomforde}. % (Tomforde, 2007).

\begin{lem} \label{lem: Ranga 4}
Let $E$ be an arbitrary graph and let $H$ be a hereditary and
saturated subset of vertices in $E$. If $I$ is the two-sided ideal
generated by $H$, then $I \cap E^0= H$.
\end{lem}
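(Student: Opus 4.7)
The plan is to construct an explicit algebra homomorphism $\pi \colon L_K(E) \to L_K(F)$ for a suitable ``quotient graph'' $F$, designed so that vertices in $H$ are killed by $\pi$ while vertices outside $H$ remain nonzero. This will immediately give $I \cap E^0 \subseteq H$, and the reverse inclusion $H \subseteq I \cap E^0$ is trivial.

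First I would define the graph $F$ by $F^0 := E^0 \setminus H$ and $F^1 := \{e \in E^1 \mid r(e) \notin H\}$, inheriting the source and range maps from $E$. Because $H$ is hereditary and $s(e) \geq r(e)$ for every edge $e$, we have $s(e) \notin H$ whenever $r(e) \notin H$, so the source and range maps are well-defined on $F^1$ and $F$ is a legitimate subgraph of $E$.

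Next I would use the universal property of $L_K(E)$ to produce $\pi$ from the assignments $v \mapsto v$ for $v \notin H$, $v \mapsto 0$ for $v \in H$, $e \mapsto e$ and $e^* \mapsto e^*$ when $r(e) \notin H$, and $e, e^* \mapsto 0$ when $r(e) \in H$. Relations (1)--(4) are straightforward to verify directly on generators. The essential step is relation (5): for each $v \in E^0$ with $0 < |s^{-1}(v)| < \infty$, one must check the identity $\pi(v) = \sum_{s(e)=v} \pi(e)\pi(e^*)$ in $L_K(F)$. If $v \in H$, hereditariness forces $r(e) \in H$ for every edge $e$ with $s(e) = v$, and both sides vanish. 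If $v \notin H$, saturatedness of $H$ ensures $r(s^{-1}(v)) \not\subseteq H$, so $s_F^{-1}(v)$ is nonempty and finite, and the identity reduces to relation (5) already holding in $L_K(F)$.

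With $\pi$ in hand, the conclusion is immediate: since $\pi$ kills $H$, it also kills the two-sided ideal $I$ generated by $H$; if some $v \in I \cap E^0$ were not in $H$, then $\pi(v) = v$, which is nonzero in $L_K(F)$ by the standard fact that vertices are nonzero elements of any Leavitt path algebra, contradicting $v \in \ker \pi$. The main obstacle in this approach is precisely the verification of relation (5) when defining $\pi$, since this is the single step that requires both the hereditary and the saturated conditions on $H$ to be used in an essential way; everything else is routine bookkeeping.
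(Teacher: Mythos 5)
Your proof is correct, and it is essentially the argument the paper relies on: the paper does not prove this lemma itself but defers to the first paragraph of the proof of Theorem 5.7 in Tomforde's paper, which is precisely this construction of the quotient graph $F$ (with $F^0 = E^0\setminus H$, $F^1 = \{e : r(e)\notin H\}$) and the induced homomorphism $L_K(E)\to L_K(F)$ killing $H$ but no vertex outside $H$. Your identification of relation (5) as the one place where both hereditariness and saturation are genuinely needed matches the standard treatment.
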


\begin{thm} \label{Thm: Noetherian}
%Let $E$ be a row-finite arbitrary graph. Then $L_K(E)$ is two-sided
%Noetherian if and only if the hereditary and saturated subsets of the vertices in
%$E^0$ satisfy the ascending chain condition (equivalently, $L_K(E)$ is graded two-sided Noetherian).
Let $E$ be a row-finite graph. Then the following are equivalent:

\begin{arabiclist}
\item $L_K(E)$ has a.c.c. on two-sided ideals,
\item $L_K(E)$ has a.c.c. on two-sided graded ideals,
\item The hereditary saturated closures of the subsets of the vertices in $E^0$
satisfy a.c.c..
\end{arabiclist}
\end{thm}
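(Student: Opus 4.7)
My approach is to prove the cycle $(1) \Rightarrow (2) \Rightarrow (3) \Rightarrow (1)$. The step $(1) \Rightarrow (2)$ is immediate, as graded two-sided ideals form a subclass of all two-sided ideals. For $(2) \Leftrightarrow (3)$, I would invoke the standard lattice isomorphism, valid for row-finite graphs (see \cite{Ara}, \cite{Abrams Pino}), between graded two-sided ideals of $L_K(E)$ and hereditary saturated subsets of $E^0$, given by $I \mapsto I \cap E^0$ with inverse $H \mapsto \langle H \rangle$. Since a subset is hereditary and saturated precisely when it equals its own hereditary saturated closure, a.c.c.\ on graded two-sided ideals is literally a.c.c.\ on hereditary saturated closures of subsets of $E^0$.

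The substance lies in $(3) \Rightarrow (1)$. Given an ascending chain $I_1 \subseteq I_2 \subseteq \cdots$, I would first note that each $H_n := I_n \cap E^0$ is hereditary (because $w = \mu^* v \mu \in I_n$ whenever $\mu$ is a path from $v \in I_n$ to $w$) and saturated (from the Leavitt relation $v = \sum_{s(e)=v} e\,r(e)\,e^*$). By (3) the chain $(H_n)$ stabilizes at some $H$ for $n \geq N_0$, so $\langle H \rangle \subseteq I_n$ for such $n$. Using the standard identification $L_K(E)/\langle H \rangle \cong L_K(E \setminus H)$ for row-finite $E$, which is itself row-finite and inherits property (3), I would replace $I_n$ by its image in the quotient to reduce to the case $I_n \cap E^0 = \emptyset$ for every $n$.

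In this reduced setting Theorem~\ref{THM} shows every $I_n$ is generated by elements $p(g) = v + \sum \lambda_k g^k$ attached to cycles $g$ based at vertices $v$, and since $v \notin I_n$ each polynomial $p$ has positive degree. At each such $v$ there is in fact a unique cycle $g_v$ in the graph: if $g_1 \neq g_2$ were two cycles at $v$ and $v + p(g_1) \in I_n$, then $g_2^* g_1 = 0$ gives $g_2^*(v + p(g_1)) = g_2^*$, whence $v = g_2^* g_2 \in I_n$, contradicting $I_n \cap E^0 = \emptyset$. Hence the data of $I_n$ at each relevant vertex $v$ reduces to the set $P_n(v) := \{\, p(x) \in K[x] : p(g_v) \in I_n \,\}$, a principal ideal of $K[x]$ (closure under g.c.d.\ follows from Remark~\ref{rem:euclidean}). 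Lemma~\ref{lem: Ranga 2} shows that when $u \geq v$ are both cycle-bearing with relations in $I_n$, the datum at $v$ is subsumed by that at $u$, so only the $\geq$-maximal cycle-bearing vertices matter. Lemma~\ref{lem: Ranga 3} combined with (3) limits these to finitely many: if $v_1, v_2, \ldots$ were infinitely many pairwise $\geq$-incomparable cycle-bearing vertices, Lemma~\ref{lem: Ranga 3} would force $\overline{\{v_1\}} \subsetneq \overline{\{v_1, v_2\}} \subsetneq \cdots$ to be strictly ascending, contradicting~(3). Thus $I_n$ is determined by finitely many ascending chains in the Noetherian ring $K[x]$, each of which stabilizes, forcing $(I_n)$ to stabilize.

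The delicate points I anticipate are the uniqueness-of-cycle claim at each surviving vertex (hinging on the identity $g_2^* g_1 = 0$ for distinct cycles based at $v$) and verifying that the quotient $L_K(E)/\langle H \rangle \cong L_K(E \setminus H)$ is indeed the Leavitt path algebra of a row-finite graph inheriting~(3). With those in place, everything reduces to combining Theorem~\ref{THM} with the divisibility behaviour of minimal polynomials in $K[x]$ (Remark~\ref{rem:euclidean}) and the finiteness extracted from Lemmas~\ref{lem: Ranga 2} and~\ref{lem: Ranga 3}.
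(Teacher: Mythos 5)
Your proposal is correct in substance, and its engine is the same as the paper's: the decisive finiteness step in both arguments is that an infinite family of pairwise $\geq$-incomparable cycle-bearing vertices carrying positive-degree relations would, via Lemma~\ref{lem: Ranga 3}, produce a strictly ascending chain of hereditary saturated closures, contradicting (3); and both use Theorem~\ref{THM}, Remark~\ref{rem:euclidean} and Lemma~\ref{lem: Ranga 2} to reduce an ideal to one minimal polynomial per surviving cycle. The packaging, however, is genuinely different. The paper proves that \emph{every} two-sided ideal is finitely generated (splitting its Theorem~\ref{THM} generators into the finitely many positive-degree ones and a vertex part absorbed by a single finite $S_n$ with $\overline{S}_n \subseteq \langle S_n\rangle$), and then invokes the equivalence of a.c.c.\ with finite generation; it needs only gcds in $K[x]$, not Noetherianity of $K[x]$, and it never leaves the graph $E$. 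You instead stabilize a given chain directly: you peel off the vertex part by passing to the quotient graph, and then track, at each of the finitely many surviving cycles, an ascending chain of ideals $P_n(v)$ of $K[x]$. This buys a cleaner conceptual picture (an ideal in the reduced situation \emph{is} a finite tuple of ideals of $K[x]$), and your $(2)\Leftrightarrow(3)$ via the graded-ideal/hereditary-saturated-set lattice isomorphism is tidier than the paper's closing remark; the cost is that you import two external results the paper deliberately avoids — the identification $L_K(E)/\langle H\rangle \cong L_K(E/H)$ together with the inheritance of (3) by the quotient graph, and the full lattice isomorphism rather than just the easy direction recorded in Lemma~\ref{lem: Ranga 4}.

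Two points to tighten. First, ``only the $\geq$-maximal cycle-bearing vertices matter'' is not literally usable: $\geq$ is a preorder and maximal elements dominating everything need not exist (an infinite strictly ascending chain of relevant vertices is not excluded by your antichain bound). What rescues this — and what the paper also leaves implicit — is the observation inside the proof of Lemma~\ref{lem: Ranga 2} that if $u\geq v$ and both carry minimal positive-degree relations, then $v$ lies on the cycle at $u$, so $v\geq u$ as well; hence comparability among relevant vertices is an equivalence, the relevant vertices form an antichain of equivalence classes, and your Lemma~\ref{lem: Ranga 3} argument then makes that antichain finite. Second, you should fix the finite set of representative cycles uniformly in $n$ (taking the union over $n$ of the relevant vertices before quotienting by equivalence), so that the finitely many chains $P_n(v)$ you stabilize really do control every $I_n$ simultaneously. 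With those two repairs the argument goes through.
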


\begin{proof} $(3) \Rightarrow (1)$ Suppose the ascending
chain condition holds on the hereditary saturated closures of the subsets of
$E^0$. Let $I$ be a two-sided ideal of $L_K(E)$. By Theorem
\ref{THM} and by Remark \ref{rem:euclidean}, $I$ is generated by the set
\begin{eqnarray*}
T = &\{v+ \sum_r\lambda_rg^r = p(g) \in I ~|~ v \in E^0 , g {\rm ~is
~a ~cycle ~(may~be~trivial)~based~at~}v~{\rm and} \\
&~p(x) \in K[x]{\rm
~is~a~polynomial~of~smallest~degree~such~that~}p(g)\in
I{\rm~and~}p(0)=1\}.
\end{eqnarray*}

It is well known that two-sided Noetherian is equivalent to every
two-sided ideal being finitely generated, so we wish to show that
$I$ is generated by a finite subset of $T$.

Suppose, towards a contradiction, there are infinitely many
$p_i(g_i) = v_i + \sum\lambda_rg_i^r \in T$ with $i \in H$, an
infinite set and for each $i$, $g_i$ is a non-trivial cycle based at
$v_i$ and that ${\rm deg} p_i(x) >0$. By Lemma \ref{lem: Ranga 2},
we may assume that for any two $i$, $j$ with $i \neq j$, $v_i \ngeq
v_j$. Well-order the set $H$ and consider it as the set of all
ordinals less than an infinite ordinal $\kappa$. Define $S_1 =
{v_1}$ and for any $\alpha < \kappa$, define $S_\alpha =
\cup_{\beta<\alpha}S_\beta$ if $\alpha$ is a limit ordinal, and
define $S_\alpha=S_\beta \cup \{v_{\beta+1}\}$ if $\alpha$ is a
non-limit ordinal of the form $\beta+1$. By the hypothesis the
ascending chain of hereditary saturated closures of subsets $\overline{S}_1
\subset \overline{S}_2 \subset \cdots \overline{S}_\alpha \subset
\cdots$ becomes stationary after a finite number of steps. So there
is an integer $n$ such that $\overline{S}_n = \overline{S}_{n+1} =
\cdots$. Now $v_{n+1} \in \overline{S}_{n+1} = \overline{S}_n$ and
by Lemma \ref{lem: Ranga 3}, there is a $v_i \in S_n$ such that $v_i
\geq v_{n+1}$. This is a contradiction. Hence the set $W =
\{p_i(g_i) \in T ~|~ {\rm deg}p_i(x) >0 \}$ is finite.

So by the previous paragraph, if there are only finitely many
$p_i(g_i)$ in $T$ with ${\rm deg}p_i(x)=0$, that is, only finitely
many vertices in $T$, then we are done. We index the vertices
$v_\alpha$ in $T$ by ordinals $\alpha < \kappa$, an infinite
ordinal, then as before, we get a well-ordered ascending chain of
hereditary saturated closure of subsets $\overline{S}_1 \subset
\overline{S}_2 \subset \cdots \subset \overline{S}_\alpha \subset
\cdots$ $(\alpha < \kappa)$ where $S_1 = \{v_1\}$ and the $S_\alpha$
are inductively defined as before. Since, by hypothesis, this chain
becomes stationary after a finite number of steps, there is an
integer $n$ such that $\overline{S}_\alpha = \overline{S}_n$ for all
$\alpha > n$. Thus $\{ v_\alpha ~|~\alpha < \kappa\} \subset
\overline{S}_n$. Since the ideal generated by the finite set $S_n =
\{v_1, \dots, v_n\}$ contains $\overline{S}_n$, we conclude that the
ideal $I$ is generated by the finite set $W \cup S_n$. Thus the
Leavitt path algebra is two-sided Noetherian.

$(1) \Rightarrow (3)$ Conversely, suppose $L_K(E)$ is two-sided
Noetherian. Consider an ascending chain of hereditary saturated
closures of subsets of vertices $\overline{S}_1 \subset
\overline{S}_2 \subset \cdots$ in $E^0$. Consider the corresponding
ascending chain of two-sided ideals $I_1 \subset I_2 \subset
\cdots,$ where for each integer $i$, $I_i$ is the two-sided ideal
generated by $\overline{S}_i$. By hypothesis, there is an integer
$n$ such that $I_n=I_i$ for all $i
>n$. We claim that $\overline{S}_i = \overline{S}_n$ for all $i>n$.
Otherwise, we can find a vertex $w \in \overline{S}_i \setminus
\overline{S}_n$ and since $w \in I_i=I_n$, $w \in I_n \cap E^0 =
\overline{S}_n$ by Lemma \ref{lem: Ranga 4} and this is a
contradiction.

$(1) \Leftrightarrow (2)$ It is well-known (see \cite{Tomforde})
that if $I(H)$ is a two-sided ideal of $L_K(E)$ generated by a
hereditary and saturated subset $H$ of $E^0$, then $I(H)$ is a
graded ideal of $L_K(E)$. If we call $L_K(E)$ graded two-sided
Noetherian if graded two-sided ideals of $L_K(E)$ satisfy the
ascending chain condition, then Theorem \ref{Thm: Noetherian} states
that for any graph $E$, $L_K(E)$ is two-sided Noetherian if and only
if it is graded two-sided Noetherian.
\end{proof}

\begin{rem}
We note that this result only shows that the a.c.c. on graded ideals
is sufficient to get a.c.c. on all ideals, and that we are not
proving that every ideal in a Noetherian Leavitt path algebra is
graded. As an example we can consider $K[x, x^{-1}]$, which is the Leavitt path algebra of the
graph with one vertex and one loop. Note that although this Leavitt path algebra has infinitely many ideals, it is
nonetheless Noetherian, but has only the trivial graded ideals.
\end{rem}

Now we easily see
\begin{cor}
Every Leavitt path algebra with a finite graph is two-sided Noetherian.
\end{cor}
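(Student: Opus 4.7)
The plan is to invoke Theorem \ref{Thm: Noetherian} and reduce the statement to a purely combinatorial fact about finite sets. By part (3) of that theorem, $L_K(E)$ is two-sided Noetherian if and only if the collection of hereditary saturated closures of subsets of $E^0$ satisfies the ascending chain condition. So it suffices to verify this a.c.c.\ under the hypothesis that $E$ is finite.

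First I would observe that if $E$ is a finite graph, then the vertex set $E^0$ is finite, hence has only finitely many subsets. In particular, the family $\{\overline{S} : S \subseteq E^0\}$ is a finite collection of subsets of the finite set $E^0$. Since every hereditary saturated closure $\overline{S}$ is itself a subset of $E^0$, any strictly ascending chain $\overline{S}_1 \subsetneq \overline{S}_2 \subsetneq \cdots$ can have length at most $|E^0|$, and so such a chain must stabilize after finitely many steps.

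Therefore condition (3) of Theorem \ref{Thm: Noetherian} holds trivially, and by the equivalence established there, $L_K(E)$ satisfies the a.c.c.\ on two-sided ideals, i.e., is two-sided Noetherian. There is no real obstacle here: the entire work has been done in proving Theorem \ref{Thm: Noetherian}, and finiteness of $E^0$ instantly supplies the combinatorial hypothesis needed to apply it.
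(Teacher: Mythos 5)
Your proof is correct and follows exactly the route the paper intends: the paper offers no explicit argument (it simply says ``Now we easily see''), and the intended reasoning is precisely your observation that a finite $E^0$ has only finitely many subsets, so condition (3) of Theorem \ref{Thm: Noetherian} holds trivially. Nothing further is needed.
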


We conclude by presenting another example of a non-Noetherian Leavitt path algebra.

\begin{ex} \label{ex:2}
Let $E=(E^0, E^1, r, s)$ be the directed graph where
$E^0=\{v,w_1,w_2,w_3, \dots \}$ and $E^1=\{e_1, e_2, \dots \}\cup
\{f_1,f_2, \dots\}$ is such that $r(e_i)=v$ and
$s(e_i)=r(f_i)=s(f_i)=w_i$. The graph of this Leavitt path algebra
is given in Figure \ref{figure:2}.
\begin{figure} \label{figure:2}
\centerline{\psfig{file=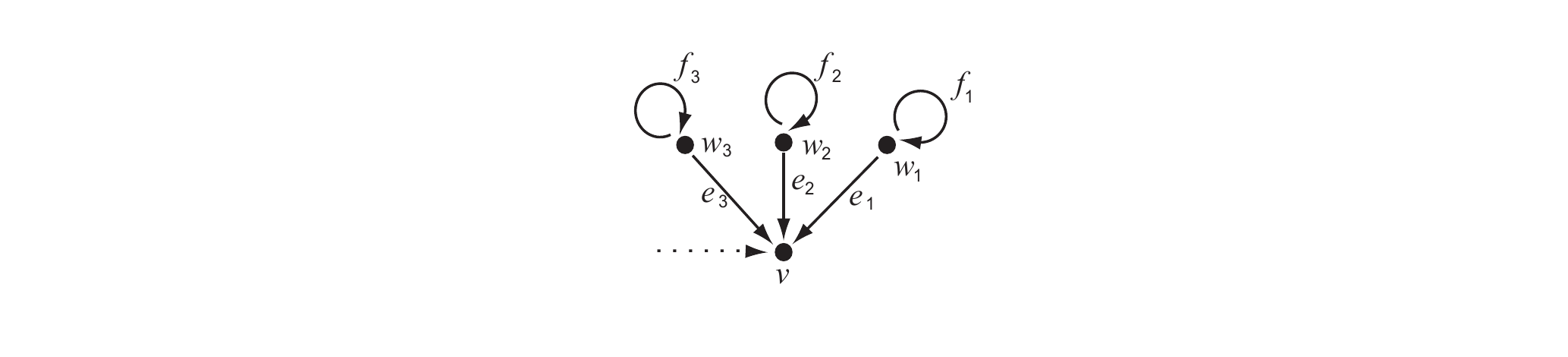}}
\vspace*{8pt}
\caption{Leavitt path algebra defined in Example \ref{ex:2}.}
\end{figure}
Note that if we let $S_i=\{w_1, \dots, w_i\}$, then $\overline{S_1}
\subset \overline{S_2} \subset \cdots$ is a non-terminating
ascending chain of hereditary saturated closures of sets in
$E^0$. Hence by Theorem \ref{THM}, $L_K(E)$ is not two-sided
Noetherian. Indeed, $<w_1> \subset <w_1, w_2> \subset \cdots$ is a
non-terminating ascending chain of ideals in $L_K(E)$.
\end{ex}

In \cite{ABCR} we present
some additional consequences of Theorem \ref{THM}, including a description of the
two-sided artinian Leavitt path algebras.

\section*{Acknowledgments}
The author is indebted to Dr. Jason P. Bell for the guidance and the technical discussions since the inception of this work, as well as to Dr. Gene Abrams and to Dr. Kulumani M. Rangaswamy for their thorough reviews and support. The author also thanks Dr. Gonzalo Aranda Pino and Dr. Kathi Crow for their valuable feedback on the first draft of this paper, and Azhvan Sheikh Ahmady for reading and improving this manuscript.

\end{document}